\DeclareSymbolFont{cyrletters}{OT2}{wncyr}{m}{n}
\DeclareMathSymbol{\Sha}{\mathalpha}{cyrletters}{"58}
\theoremstyle{plain}
\newtheorem{theorem}{Theorem}[section]
\newtheorem{proposition}[theorem]{Proposition}
\theoremstyle{definition}
\theoremstyle{remark}
\newtheorem*{remark}{Remark}
\numberwithin{equation}{section}
\def\({\left(}
\def\){\right)}
\def\<{\left<}
\def\>{\right>}
\begin{document}

\title[Note on a partition limit]{Note on a partition limit theorem for rank and crank}

\author{Persi Diaconis}
\address{Stanford University, Department of Mathematics and Statistics, Sequoia Hall, 390 Serra Mall, Stanford, CA 94305-4065, USA}
\email{diaconis@math.stanford.edu}

\author{   Svante Janson}
\address{Uppsala University,
   Department of Mathematics,
   Box 480, 751 06,
   Uppsala, Sweden}
 \email{svante.janson@math.uu.se}

\author{Robert C. Rhoades}
\address{Stanford University, Department of Mathematics, Sloan Hall, 450 Serra Mall,, Stanford, CA 94305-2125, USA}
\email{rhoades@math.stanford.edu}

%\thanks{Research of the second author was supported by an
%NSF Mathematical Sciences Postdoctoral Fellowship.}

\date{\today}
\thispagestyle{empty} \vspace{.5cm}
\begin{abstract}
If $\lambda$ is a partition of $n$, the rank $rk(\lambda)$ is the size of the largest part minus the number of parts. Under the uniform distribution on partitions, in Bringmann-Mahlburg-Rhoades
\cite{bmr11} it is shown that $rk(\lambda)/\sqrt{6n}$ has a limiting distribution. We identify the limit as the difference between two independent extreme value distributions and as the distribution of $\beta(T)$ where $\beta(t)$ is standard Brownian motion and $T$ is the first time that an independent three-dimensional Brownian motion hits the unit sphere. The same limit holds for the crank.
\end{abstract}

\maketitle

%%%%%%%%%%%%%%%%%
%%                                                %%
%%       INTRODUCTION          %%
%%                                                %%
%%%%%%%%%%%%%%%%%
%\section{Statme}

 Let $\lambda=(\lambda_1,\lambda_2,\dots,\lambda_l)$ with $\lambda_1\geq\lambda_2\geq\dots\geq\lambda_l>0$, $\sum_{i=1}^l\lambda_i=n$ be a partition of $n$. The rank $rk(\lambda)=\lambda_1-l$ was introduced by Dyson \cite{dyson} to explain some of Ramanujan's congruences (e.g., $p(5n+4)\equiv0\pmod5$). This story with its many developments to other primes and cranks can be accessed from the Wikipedia entry on Ramanujan's congruences and its references; see, for example, \cite{andrews} for more on the crank. 
 In \cite{bmr11,bmr12}, Bringmann, Mahlburg, and Rhoades studied the moments of $rk(\lambda)$ when $\lambda$ is chosen from the uniform distribution on the partitions of $n$, $\mathcal{P}(n)$. Write $p(n)$ for $|\mathcal{P}(n)|$. Their results imply
\begin{theorem}[\cite{bmr11}]
There is a distribution function $F_r(x)$ on $\mathbb{R}$ such that, for all $x$,
\begin{equation}
\lim_{n\to\infty}\frac1{p(n)}\left|\left\{\lambda\in\mathcal{P}(n):\frac{rk(\lambda)}{\sqrt{6n}}\leq x\right\}\right|\longrightarrow F_r(x).
\label{1}
\end{equation}
\end{theorem}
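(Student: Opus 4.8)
The plan is to prove the theorem by the method of moments, taking as input the moment asymptotics established by Bringmann, Mahlburg, and Rhoades. First I would record that the rank is antisymmetric under conjugation: if $\lambda'$ denotes the conjugate partition, then $rk(\lambda')=-rk(\lambda)$, and conjugation is a bijection of $\mathcal{P}(n)$. Hence the law of $rk(\lambda)$ under the uniform distribution is symmetric about $0$, so all odd moments vanish identically for every $n$, and it suffices to understand the even moments.

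Next I would invoke the BMR asymptotic formula for the even moments, which has the shape
\[
\frac{1}{p(n)} \sum_{\lambda \in \mathcal{P}(n)} rk(\lambda)^{2k} = \mu_{2k}\,(6n)^k \(1 + o(1)\), \qquad n \to \infty,
\]
for explicit constants $\mu_{2k}>0$. Dividing by $(6n)^k$ shows that the $2k$-th moment of the normalized variable $rk(\lambda)/\sqrt{6n}$ converges to $\mu_{2k}$, while the $(2k-1)$-th moment converges to $0$. Thus every moment of $rk(\lambda)/\sqrt{6n}$ converges to a corresponding candidate limiting moment $m_j$, with $m_{2k}=\mu_{2k}$ and $m_{2k-1}=0$.

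The crucial remaining step is to verify that the moment sequence $\{m_j\}$ determines a unique distribution, so that the Fr\'echet--Shohat theorem applies and yields weak convergence to that unique law. For this I would check Carleman's condition $\sum_k m_{2k}^{-1/(2k)}=\infty$; concretely one must show that the $\mu_{2k}$ grow no faster than roughly $(2k)!$, which is exactly the growth rate of the difference of two independent extreme-value variables that the paper later identifies as the limit. Granting Carleman's condition, the method of moments produces a distribution function $F_r$ with $\int_{\R} x^j\, dF_r(x)=m_j$ and weak convergence of the laws of $rk(\lambda)/\sqrt{6n}$ to $F_r$.

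Finally, weak convergence gives convergence of distribution functions at every continuity point of $F_r$. Since the limit law is continuous (it has a density, being a difference of two independent absolutely continuous variables), every $x\in\R$ is a continuity point, and \eqref{1} holds for all $x$, as claimed. I expect the main obstacle to be the verification of Carleman's condition, that is, extracting a usable growth bound on the $\mu_{2k}$ from the combinatorial and analytic moment formula; once the moments are controlled in this way, the remainder is the standard moment-convergence machinery.
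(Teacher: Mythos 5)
Your proposal is correct and follows essentially the same route as the paper, which cites the moment asymptotics $N_{2l}(n)/\big((6n)^l p(n)\big)\sim(2^{2l}-2)|B_{2l}|$ from \cite{bmr11} and notes that ``a routine application of the method of moments yields \eqref{1}.'' The one step you flag as the potential obstacle, Carleman's condition, is indeed immediate here: since $|B_{2l}|\sim 2(2l)!/(2\pi)^{2l}$, the even moments grow like $2(2l)!/\pi^{2l}$, so $\sum_l \mu_{2l}^{-1/(2l)}$ diverges and the limit law is moment-determined.
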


The main result of this note identifies the limit $F_r(x)$. Recall the extreme value distribution function $F_e(x)=e^{-e^{-x}}$, $-\infty<x<\infty$. This occurs as the limiting distribution of the maximum of a wide variety of sequences of random variables. Indeed, Erd\"os and Lehner
\cite{erdos} proved that it governs the distribution of the largest part of a random partition:
\begin{equation}
\lim_{n\to\infty}\frac1{p(n)}\left|\left\{\lambda\in\mathcal{P}(n):\frac{\lambda_1-\frac{\sqrt{6n}}{\pi}\log\left(\sqrt{6n}/\pi\right)}{\sqrt{6n}/\pi}\leq x\right\}\right|\longrightarrow F_e(x).
\label{2}
\end{equation}
By reflection symmetry, the limiting distribution of $l(\lambda)$ is the same. This makes it plausible that the limiting distribution of $rk(\lambda)$ should be the difference between two independent extreme value distributions. This is our result:
\begin{proposition}\label{prop:result}
The  limit $F_r(x)$ in \eqref{1} is the distribution function of $(W_1-W_2)/\pi$ where $W_i$ are independent with distribution function $F_e(x)$. Thus $F_r(x)=1/(1+e^{-\pi x})$.
\end{proposition}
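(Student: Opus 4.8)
The plan is to split the rank into its two natural pieces and reduce everything to the one–variable result \eqref{2}. Set $b_n=\sqrt{6n}/\pi$ and $a_n=b_n\log b_n$, and write
\begin{equation*}
\frac{rk(\lambda)}{b_n}=X_n-Y_n,\qquad X_n=\frac{\lambda_1-a_n}{b_n},\quad Y_n=\frac{l-a_n}{b_n}.
\end{equation*}
By Erd\"os--Lehner \eqref{2}, $X_n$ converges in distribution to a variable $W_1$ with distribution $F_e$. Since conjugation $\lambda\mapsto\lambda'$ is a measure--preserving bijection of $\cP(n)$ interchanging the largest part $\lambda_1$ and the number of parts $l$, the same statement gives $Y_n\Rightarrow W_2$ with $W_2$ distributed as $F_e$. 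If one knows in addition that $X_n$ and $Y_n$ are \emph{asymptotically independent}, then $(X_n,Y_n)\Rightarrow(W_1,W_2)$ with $W_1,W_2$ independent, whence $rk(\lambda)/b_n\Rightarrow W_1-W_2$ and therefore $rk(\lambda)/\sqrt{6n}\Rightarrow(W_1-W_2)/\pi$, which is exactly \eqref{1} with the stated limit.

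Granting the independence, the explicit formula is an elementary convolution. Writing $f_e(w)=e^{-w}e^{-e^{-w}}$ for the density of $F_e$ and substituting $u=e^{-w}$,
\begin{equation*}
\PP(W_1-W_2\le t)=\int_{-\infty}^{\infty}e^{-e^{-(t+w)}}f_e(w)\,dw=\int_0^{\infty}e^{-(1+e^{-t})u}\,du=\frac1{1+e^{-t}},
\end{equation*}
so $W_1-W_2$ is standard logistic and $(W_1-W_2)/\pi$ has distribution $1/(1+e^{-\pi x})$, proving Proposition~\ref{prop:result}.

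The substance is therefore the asymptotic independence of the largest part and the number of parts, and this is where I expect the real work. I would realize the uniform measure on $\cP(n)$ through Fristedt's conditioning device: under the grand canonical (Boltzmann) measure $\PP_q$ with $q=e^{-\pi/\sqrt{6n}}$, the multiplicities $Z_k=\#\{i:\lambda_i=k\}$ are independent with $\PP_q(Z_k=j)=(1-q^k)q^{kj}$, and conditioning on $\sum_k kZ_k=n$ recovers the uniform law. Fix a cutoff $M_n$ with $M_n/\sqrt n\to\infty$ but $M_n=o(\sqrt n\log n)$ (e.g.\ $M_n=\sqrt n\log\log n$), and split the parts into small ($k\le M_n$) and large ($k>M_n$); the two blocks of multiplicities are $\PP_q$--independent. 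On one hand $\lambda_1=\max\{k:Z_k\ge1\}$ exceeds $M_n$ with probability tending to $1$, so off a negligible event $\lambda_1$ is a functional of the large block alone. On the other hand the large block contributes $\sum_{k>M_n}Z_k$ to $l$, and the crude estimate $\mathbb E_q\sum_{k>M_n}Z_k=O(\sqrt n\,e^{-(\pi/\sqrt6)(M_n/\sqrt n)})=o(b_n)$ shows that $Y_n$ agrees, up to $o_P(1)$, with the centered small--block count, a functional of the small block alone. Under $\PP_q$ this already yields asymptotic independence of $X_n$ and $Y_n$.

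The remaining obstacle---the most delicate point---is to transfer this independence through the conditioning $\sum_k kZ_k=n$, which a priori recouples the two blocks. I would control this with a local central limit theorem for the total size $S=\sum_k kZ_k$: writing $S$ as the sum of the two independent block--sizes, the conditional law of a pair $\big(F(\mathrm{large}),G(\mathrm{small})\big)$ given $S=n$ is a ratio of convolutions, and since the small block dominates $S$ (its fluctuations, of order $n^{3/4}$, swamp those of the large block) the Gaussian local limit theorem lets the conditioning be absorbed by the small block while leaving the large--block functional $X_n$ asymptotically free of it. Establishing the requisite uniform tail and local--limit estimates---of exactly the Meinardus/Erd\"os--Lehner type underlying \eqref{2}---is the crux of the argument. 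As a consistency check, conjugation forces $rk(\lambda)\overset{d}{=}-rk(\lambda)$, and indeed the limit $1/(1+e^{-\pi x})$ is symmetric about $0$.
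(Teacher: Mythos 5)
Your route is genuinely different from the paper's. The paper never touches the combinatorics again: it takes the moment asymptotics \eqref{3} from \cite{bmr11} as given, invokes Pitman--Yor \eqref{4} to identify the limit with $\beta(T)$, and then matches characteristic functions, computing $E(e^{itW})=\Gamma(1-it)$ and using the reflection formula to get $E(e^{it(W_1-W_2)})=t\pi/\sinh(t\pi)$. That is a three-line verification, at the cost of resting entirely on the prior moment computation (and, implicitly, on the limit law being moment-determined). You instead try to prove the limit theorem directly from the decomposition $rk(\lambda)=\lambda_1-l$, Erd\"os--Lehner \eqref{2} for each marginal, and asymptotic independence of $\lambda_1$ and $l$. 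This is precisely the ``alternate proof'' the paper sketches in its closing remark, and if completed it is arguably more satisfying: it explains \emph{why} the limit is a difference of two extreme-value variables rather than discovering it by matching transforms, and it does not rely on \eqref{3} at all. Your marginal identifications, the conjugation symmetry, and the convolution computation giving the logistic law are all correct.

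The gap is the one you yourself flag: the asymptotic independence of $X_n$ and $Y_n$ under the \emph{uniform} measure is the entire content of the argument, and you only give a plan for it. The block-splitting under the Boltzmann measure is fine (your estimates for $\lambda_1>M_n$ and for $\sum_{k>M_n}Z_k$ check out), but the de-conditioning step --- showing that conditioning on $\sum_k kZ_k=n$ does not recouple the large-part functional $X_n$ with the small-part functional $Y_n$ --- requires a joint local limit theorem for $\big(G(\mathrm{small}),S_{\mathrm{small}}\big)$ uniform over the relevant range, not merely a local CLT for $S_{\mathrm{small}}$ alone, and none of this is carried out. As written, the proposal proves the proposition only conditionally on that independence. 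Two ways to close the gap: either execute the Fristedt transfer in full (\cite{fris}, \cite{pittel}), or simply cite Szekeres \cite{szekeres}, whose local limit theorem for the joint distribution of $\lambda_1$ and $l(\lambda)$ already yields the asymptotic independence you need; with that citation in hand, your argument becomes a complete and self-contained alternative to the paper's proof.
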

\begin{remark}
\begin{enumerate}
\item From \eqref{2} both $\lambda_1$ and $l(\lambda)$ are typically of size $\sqrt{n}\log n$. At first it seems surprising that $rk(\lambda)$ is only of size $\sqrt{n}$. On reflection, we see that the additive $\sqrt{n}\log n$ terms cancel in $\lambda_1-l$ and the scaling determines the size. The limit in the proposition is exactly the convolution of the two fluctuation distributions. 
Indeed, Szekeres  \cite{szekeres} has proved a local limit theorem for the joint distribution of $\lambda_1$ and $l(\lambda)$ which shows that they are asymptotically independent.

\item Let $N(n,m)$ be the number of partitions of $n$ with rank equal to $m$. Let $N_k(n)=\sum_{m\in\mathbb{Z}}m^kN(n,m)$ be the $k$th moment. Then $N_0(n)=p(n),\ N_k(n)=0$ for $k$ odd by symmetry ($N(n,m)=N(n,-m)$). Bringmann, Mahlburg, and the third author
\cite{bmr11} prove that, as $n$ tends to infinity, for each $l=1,2,\dots$,
\begin{equation}
\frac{N_{2l}(n)}{(6n)^lp(n)}\sim\left(2^{2l}-2\right)|B_{2l}|
\label{3}
\end{equation}
where $B_{2l}$ is the $2l$th Bernoulli number, $\frac{t}{(e^t-1)}=\sum_{n=0}^\infty B_nt^n/n!$. A routine application of the method of moments yields \eqref{1}. They also proved that the crank statistic has the same limiting moments. Thus the limit theorem and proposition apply to the crank.\smallskip

A web search for ``Bernoulli numbers as moments'' leads to work of Pitman and Yor \cite{pitman}. They show the following: let $\beta(t)$ be standard one-dimensional Brownian motion. Let $T$ be the first time that standard three-dimensional Brownian motion hits the unit ball ($T$ is independent of $\beta(t)$). Then $\beta(T)$ has odd moments zero and
\begin{equation*}
E\left(\beta(T)^{2l}\right)=\left(2^{2l}-2\right)|B_{2l}|.
\end{equation*}
This shows that $F_r(x)=P\{\beta(T)\leq x\}$. This strains intuition; what is the connection between Brownian motion and partitions? Our proposition explains things by showing how $F_r(x)$ has other probabilistic interpretations.

\item A simple calculation shows that the $s$th absolute moment of the logistic limit distribution $F_r$ is $2\Gamma(s+1)\pi^{-s}(1-2^{1-s})\zeta(s)$, for any $s>0$. It follows, since uniform integrability is implied by (ii), that
\begin{equation*}
\frac{\sum_m|m|^sN(n,m)}{(6n)^{s/2}p(n)}\sim2\Gamma(s+1)\pi^{-s}(1-2^{1-s})\zeta(s).
\end{equation*}
For odd integers $s$ this was recently obtained, with an error term, via other methods described in a forthcoming work by 
Bringmann and Mahlburg \cite{bm12}.
\end{enumerate}
\end{remark}
\begin{proof}[Proof of Proposition \ref{prop:result}]
From \cite{pitman},
\begin{equation}
E\left(e^{it\beta(T)}\right)=\frac{t}{\sinh(t)}.
\label{4}
\end{equation}
If $W$ has distribution function $e^{-e^{-t}}$ and so density $e^{-e^{-t}-t}$ on $\mathbb{R}$,
\begin{equation*}
E(e^{itW})=\int_{-\infty}^\infty e^{itx-e^{-x}-x}\ dx=\int_0^\infty y^{-it}e^{-y}\ dy=\Gamma(1-it).
\end{equation*}
Thus $E(e^{it(W_1-W_2)})=\Gamma(1-it)\Gamma(1+it)=it\Gamma(it)\Gamma(1-it)=\frac{it\pi}{\sin(it\pi)}=\frac{t\pi}{\sinh(t\pi)}$. Here, the reflection formula $\Gamma(z)\Gamma(1-z)=\pi/\sin(\pi z)$ is used. The distribution of $(W_1-W_2)/\pi$ thus equals the distribution of $\beta(T)$. It is well known and easy to verify that the difference between two independent extreme value variates has the logistic distribution as stated.
\end{proof}
\begin{remark}
One may also study the limiting distribution of $rk(\lambda)$ using the conditioned limit approach of Fristedt \cite{fris}, \cite{pittel}, or the method of Corollary 2.6
of Borodin \cite{borodin}. These avoid moments and give the limiting distribution of many further statistics. Fristedt's approach may be developed to prove the asymptotic independence of $\lambda_1$ and $l(\lambda)$. Combining this, the results of Erd{\"o}s--Lehner, and the calculations of this note gives an alternate proof of the main theorem.
\end{remark}

%\section*{Acknowledgements}
%to add

\end{document}